\title{Interfacial Phenomena and Natural Local Time}
\author{ Appuhamillage, Thilanka A\footnote{Department of Mathematics, Oregon State University.  
$^\dagger$ School of Chemical, Biological and Environmental Engineering, Oregon State University.
$^+$ Corresponding author:
waymire@math.oregonstate.edu.  }
\and Bokil, Vrushali A$^*$
\and Thomann, Enrique A$^*$
\and Waymire, Edward C$^{*+}$
\and Wood, Brian D$^\dagger$
}
\date{February 8, 2012}
\begin{document}

\maketitle

\baselineskip=18pt \setcounter{page}{1}

\renewcommand{\theequation}{\thesection.\arabic{equation}}
\newtheorem{theorem}{Theorem}[section]
\newtheorem{lemma}[theorem]{Lemma}
\newtheorem{proposition}[theorem]{Proposition}
\newtheorem{corollary}[theorem]{Corollary}
\newtheorem{remark}[theorem]{Remark}
\newtheorem{fact}[theorem]{Fact}
\newtheorem{problem}[theorem]{Problem}
\newtheorem{example}[theorem]{Example}
\newtheorem{question}[theorem]{Question}
\newtheorem{conjecture}[theorem]{Conjecture}
\newtheorem{definition}[theorem]{Definition}

\theoremstyle{Remark}

\newcommand{\eqnsection}{
\renewcommand{\theequation}{\thesection.\arabic{equation}}
    \makeatletter
    \csname  @addtoreset\endcsname{equation}{section}
    \makeatother}
\eqnsection
\def\sgn{{\text sgn}}
\def\r{{\mathbb R}}
\def\e{{\mathbb E}}
\def\p{{\mathbb P}}
\def\P{{\bf P}}
\def\E{{\bf E}}
\def\Q{{\bf Q}}
\def\z{{\mathbb Z}}
\def\N{{\mathbb N}}
\def\T{{\mathbb T}}
\def\G{{\mathbb G}}
\def\L{{\mathbb L}}

\def\deg{\chi}

\def\ee{\mathrm{e}}
\def\d{\, \mathrm{d}}
\def\S{\mathscr{S}}


\def\C{\text{Cov}}
\def\V{\text{Var}}



\begin{abstract}
This article addresses a modification of local time for stochastic processes, to be referred to as {\it natural local time}.  It is prompted by  theoretical developments 
 arising in mathematical treatments of recent experiments and observations 
 of phenomena in the geophysical and biological sciences pertaining to dispersion 
in the presence of an interface of discontinuity in dispersion coefficients.  The results illustrate new ways in which to use the theory of stochastic processes to infer macro scale parameters and behavior from micro scale observations in particular heterogeneous environments. 
\end{abstract}

\bigskip

\noindent{\slshape\bfseries Keywords.} Advection-dispersion, discontinuous diffusion, skew Brownian motion, mathematical local time, natural local time

\bigskip

\noindent{\slshape\bfseries 2010 Mathematics Subject Classification.} 


\bigskip
\bigskip

\section{Introduction and Motivating Example}
The purpose of this article is to present some new results that call attention to the presence and effects of particular types of heterogeneity observed to occur in diverse geophysical and ecological/biological spatial environments.   A few examples of interest to the authors are provided to highlight the occurrence  of interfacial discontinuities, 
but one may expect that readers will easily be able to conceive of many others.  Apart from the motivating examples,  theoretical results are provided to illustrate the 
interfacial effects in ways that may prove useful in analyzing and interpreting more complex data sets.  

The most basic empirical considerations involve various temporal measurements, 
e.g., breakthrough times, residence or occupation times, recurrence times and,
as will be explained, a fundamental notion of {\it local time}.

Mathematical local time is a quantity with a long history in the theory of stochastic processes.  
It concerns a mathematical measure of the amount
of time a stochastic process spends locally about a point, which we will refer to as {\it mathematical
local time} in distinction with a modification introduced here to be
 referred to as {\it natural local time}.\footnote{We reserve the use of the term ``physical local time''  for a more specialized case,  but physical 
 and/or biological modeling considerations
 underlie the 
development of the nomenclature introduced in this article.} 
Starting with the celebrated theorem of Trotter \cite{trotter}
establishing the continuity of mathematical
local time for standard Brownian motion, 
the general problem of determining necessary and sufficient conditions for continuity of 
local time for stochastic processes is an old and well-studied problem at the foundations
of probability theory; see  \cite{eisen_kaspi} for recent progress and insight into the 
technical depth of the problem for a  large class
of Markov processes.  

While the present paper has little to offer to the mathematical foundations of the subject,
observations from field and laboratory experiments are provided together with some theoretical results that point to new directions in this area from the perspective of applications and modeling of certain dispersive phenomena.
 At the most fundamental 
level, the issue involves the
{\it units} of measurement.   In particular,  the units of mathematical 
local time in the context of dispersion of particle concentrations are typically
 those of {\it spatial length}.  This is not unnatural in the mathematical
context owing to the locally linear relationship between spatial variance  and time when the dispersion coefficient is sufficiently smooth.  However, as will be seen, this breaks down in the presence of an interface of discontinuity in the diffusion coefficient.   

Effort is made to make this article accessible to a diverse audience by
including some basic mathematical definitions and background theory.  A complete and systematic treatment of essentially all of the underlying mathematical concepts can be found in  \cite{revuz_yor}.  We conclude this introductory section with an empirical example  that serves to
motivate and illustrate the role of a key mathematical construct,  namely that of {\it skew Brownian motion}.    This is followed by a section with additional empirical illustrations from the ecological and biological sciences.  The emphasis of the paper is on mathematical theory and modeling.  The main results demonstrate (i) the effects of  general point interface transmission probabilities on breakthrough times and (ii) on residence times; (iii) symmetry (via martingale) relationships between interfacial transmission probabilities, dispersion coefficients, and skewness parameters;
(iv) a special role for continuity of flux (conservation of mass) verses continuity of derivatives at an interface;
(v) a special role for (natural) local time continuity in the
determination of the transmission parameter at the interface.

\vskip .05in
\noindent{\bf Example A} ({\it\bf Dispersion in Porous Media}). 
The topic addressed in this paper  originally initiated as a result of 
questions resulting from recent
laboratory experiments designed to empirically test
and understand advection-dispersion in the presence of sharp interfaces. e.g., 
experiments by \cite{kuo_1999},  
\cite{hoteit}, 
\cite{Berkowitz09}.
Such laboratory experiments have 
been rather sophisticated in the use of layers of sands and/or
glass beads of different granularities  and modern measurement technology
 (see Figure \ref{brianslab}).

\begin{figure}[htb]
\label{brianslab}
\begin{center}
    \includegraphics[scale=0.7]{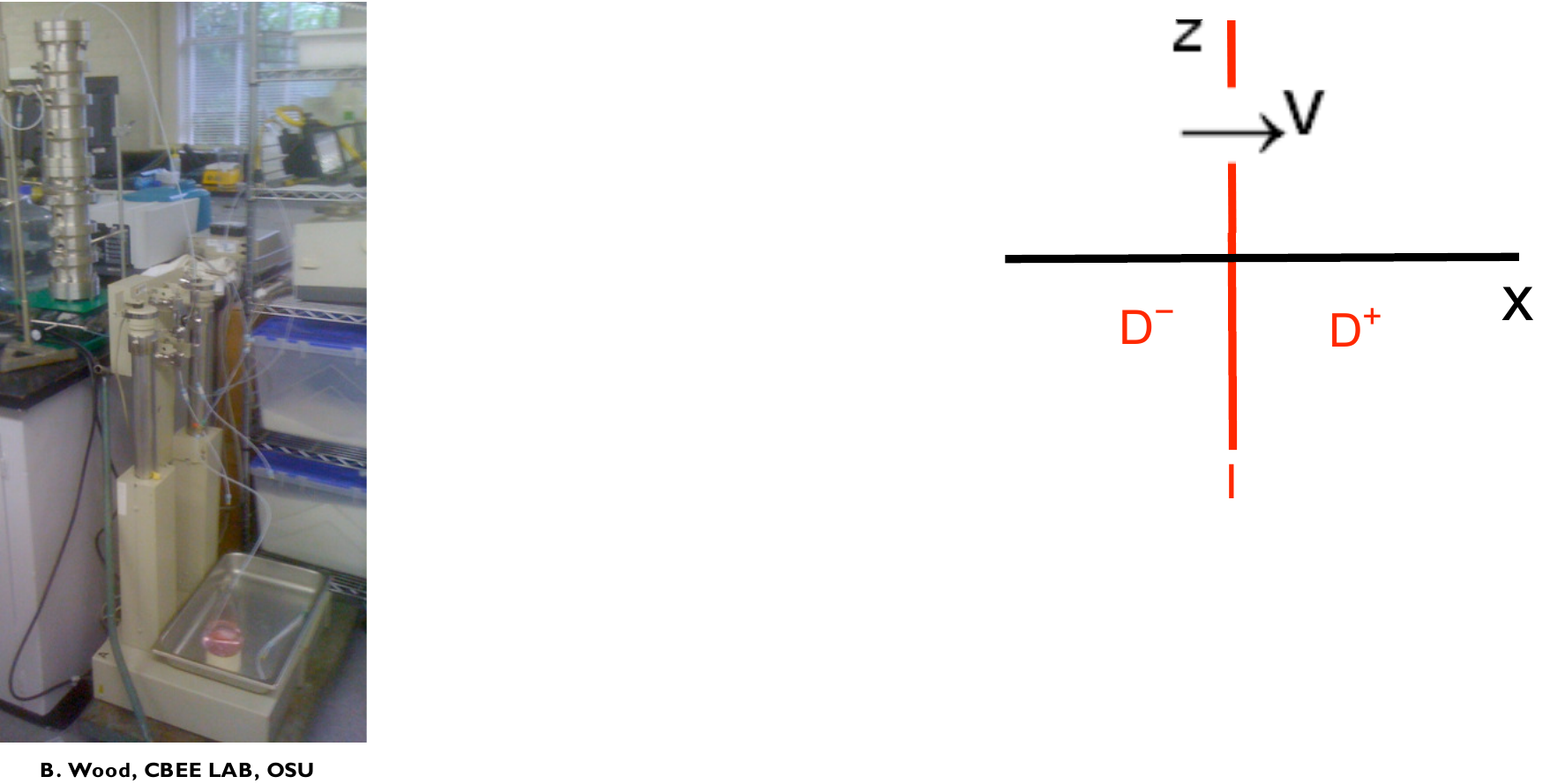}
  \end{center}
\caption{ Saturated Fine/Coarse Grain Sand Separated by an Interface Experimental Assembly, Oregon State University}
\end{figure}

From an engineering science point of view, basic research in this area is aimed 
to improve understanding of the role of sharp discontinuities
in the hydrologic parameters appearing in equations used for predictions of the spread of contaminants
 in saturated porous media. There is a huge literature in the sciences and
 engineering pertaining to the general problem.  For a perspective on the
 mathematical foundations see  \cite{bhatt},
 \cite{bhatt_gupta},\cite{bhatt_gotze}, 
  \cite{newman_winter}, \cite{papa_varad}, \cite{benarous}. The former
 considers heterogeneity from a deterministic framework, while the
 latter involve randomness in the medium.  \cite{cushman} contains
 a survey of various approaches, together with a more exhaustive set
 of references to the geophysical and hydrologic literature.  


From a general 
mathematical point of view an {\it interface} is defined by a hypersurface across which the dispersion coefficient is discontinuous. 
As is well-known for the case  of dilute suspensions
in a homogeneous medium (e.g., water),  perhaps flowing at a rate $v$,  the particle motion is that of  a  Brownian motion with a constant diffusion
coefficient $D > 0$ and drift $v$.  When applied to colloidal suspensions, this is the famous classic theoretical result of Albert Einstein \cite{einstein} 
experimentally confirmed by Jean Perrin \cite{perrin}; see \cite{newburgh}
for a more contemporary experiment.  The essential physics of the problem, however, is relatively unchanged when one considers dissolved chemical species. 

The basic phenomena of interest to us here is captured by the following:

\noindent {\bf Question.}  Suppose that a dilute solute is injected at a point $L$ units to the left of 
an interface at the origin and retrieved at a point $L$ units to the right of the interface.  Let $D^-$ denote
the (constant) dispersion coefficient to the left of the origin and $D^+$ that to the right, with
say $D^- < D^+$ (see Figure \ref{inhomogeneous}).   Conversely,
suppose the solute is injected at a point $L$ units to the right of the interface and retrieved at a point $L$
units to the left.  {\it In which of these two symmetric arrangements will the immersed solute most rapidly breakthrough at the opposite end ? } 

\begin{figure}[htb]
\label{inhomogeneous}
\begin{center}
    \includegraphics[scale=0.5]{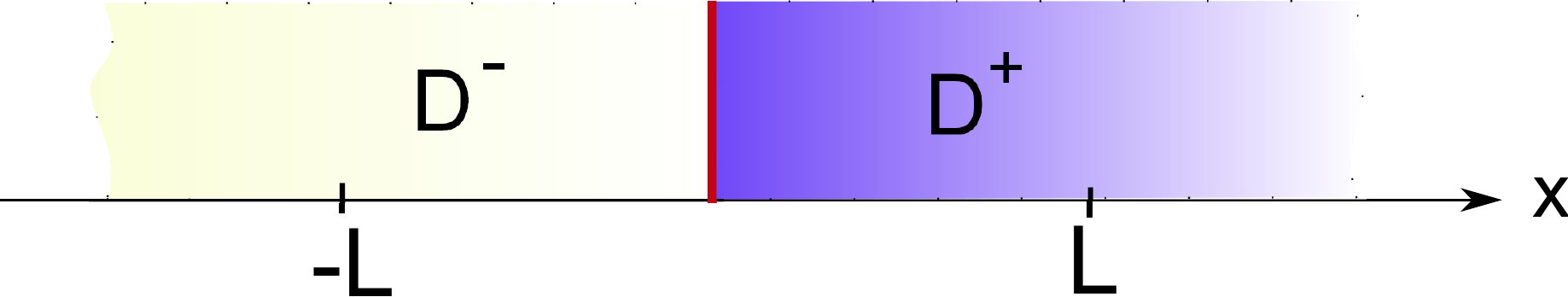}
  \end{center}
\caption{Interfacial Schematic}
\end{figure}
In the experimental set-up,  the coordinate
in the direction of flow is one-dimensional flow across an interface.  As will be explained, a localized point interface results in  a skewness effect that explains much of the empirically observed results suggested above; see 
\cite{Ramirez06}, \cite{Ramirez08}, \cite{App09a}, \cite{App09b}, \cite{Ramirez11a}, \cite{quastel}.

Let us briefly recall the notion of {\it skew Brownian motion} introduced by 
It\^o and McKean in
\cite{itomckean} to identify a class of stochastic processes corresponding to Feller's classification of
one-dimensional diffusions.  The simplest definition of skew Brownian motion $B^{(\alpha)}$
 for a given parameter, referred to as the 
 {\it transmission probability}, 
$\alpha\in[0,1]$ is as follows:  Let $|B|$ denote reflecting Brownian motion starting at zero,  and
enumerate the excursion intervals away from zero by $J_1,J_2,\dots$.
\begin{figure}[htb]
\label{skewbm}
\begin{center}
    \includegraphics[scale=0.4]{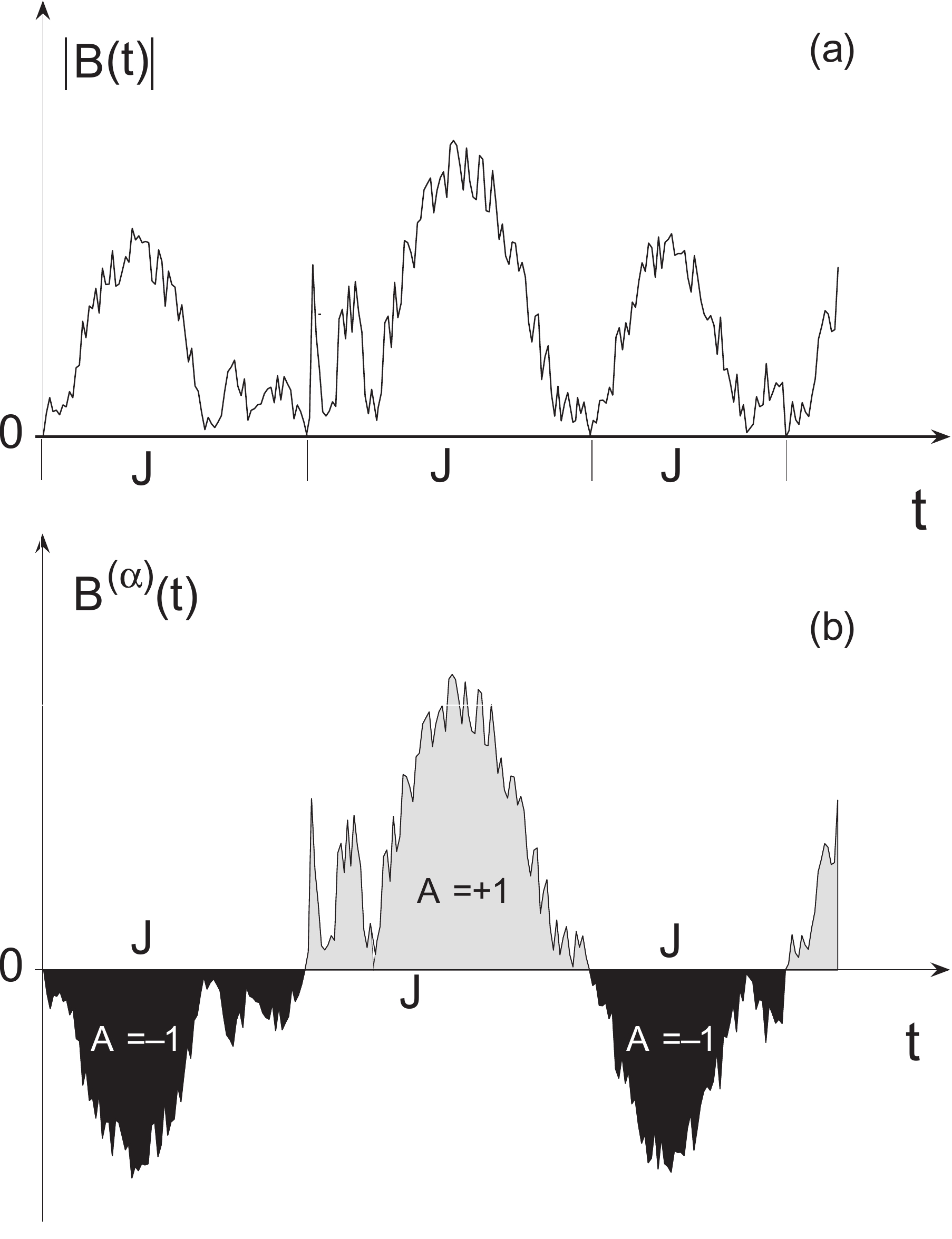}
  \end{center}
\caption{Skew Brownian Motion Construction}
\end{figure}

  This is possible because
Brownian motion has continuous paths and, therefore, its set of zeros is a closed set; the set's compliment is open and, 
hence, a countable disjoint union of open intervals.  Let $A_1,A_2,\dots$ be an
i.i.d. sequence of Bernoulli $\pm 1$ coin tossing random variables, independent of $B$, with 
$P(A_n=1) = \alpha$.  Then, $B^{(\alpha)}$ is obtained by changing the signs of the excursion over the
intervals $J_n$ whenever to $A_n=-1$, for $n = 1,2,\dots$ (see Figure \ref{skewbm}).
 That is,
\begin{equation}
B^{(\alpha)}(t) = \sum_{n=1}^\infty A_n1_{J_n}(t)|B(t)|, \qquad t\ge 0.
\end{equation}
In particular, the path $t\to B^{(\alpha)}(t), t\ge 0$ is continuous with probability one since the
Brownian paths are continuous.

For positive parameters $D^+, D^-$, consider a piecewise constant dispersion coefficient with interface
at $x = 0$ given by
$$D(x) = D^-1_{(-\infty,0)}(x) + D^+1_{[0,\infty)}(x). 
\quad x\in {\mathbb R},$$
Here $1_J$ denotes the indicator function of an interval $J$ defined by
$1_J(x) = 1, x\in J, 1_J(x) = 0, x\notin J$.
In response to the question about breakthrough  times raised above,  
the following theorem provides an answer as a consequence of 
 \cite{Ramirez06}, \cite{App09a}.  For convenience of mathematical
 notation take $v=0$ for here.
 
 \begin{remark}  Just as standard Brownian motion may be obtained as a limit of 
 rescaled simple symmetric random walks, e.g., \cite{bhattway1}, 
 an alternative description of skew Brownian motion with 
 transmission probability $\alpha$ may be obtained as a limit in distribution of 
 of the rescaled  {\it skew random walk} defined by $\pm 1$
 displacement probabilities,  $1/2$ each,  at  nonzero lattice points, and
 having  probabilities
 $\alpha$, $1-\alpha$ for transitions from  $0$ to $+1$ and $0$ to $-1$, respectively,
see  \cite{harrison_shepp}. 
 \end{remark}

\vskip .15in
\begin{theorem}
\label{thmasympassagetime}
 Let $D^+, D^-$ be arbitrary positive numbers, with  $D^- < D^+$.    
Define $Y^{(\alpha^*)}(t) = \sigma(B_t^{(\alpha^*)}), t\ge 0$, where $B^{(\alpha^*)}$ is skew Brownian motion with transmission
parameter $\alpha^* =  {\sqrt{D^+}\over \sqrt{D^+} + \sqrt{D^-}}$, and
 $\sigma(x) = \sqrt{D^+}x1_{[0,\infty)}(x) + \sqrt{D^-}x1_{(-\infty,0]}(x), x\in {\mathbb R}$.
Let $T_y = \inf\{t\ge 0:  Y^{(\alpha^*)}(t) = y\}.$  Then,

\vskip .05in

\noindent (a) For smooth  initial data  $c_0$,   \    $c(t,y) = E_yc_0(Y^{(\alpha)}(t) ), \quad t\ge 0$, solves 
$${\partial c\over\partial t} = {1\over 2}{\partial\over\partial y}(D(y){\partial c\over\partial y}),\quad D^+{\partial c(t,0^+)\over\partial y} = D^-{\partial c(t,0^-)\over\partial y}.$$

\vskip .05in

\noindent (b)  For $y > 0$,  $\quad P_{-y}(T_y > t) \le {\sqrt{D^-}\over\sqrt{D^+}} P_{y}(T_{-y} > t) <  P_{y}(T_{-y} > t) , \quad t\ge 0.$

\end{theorem}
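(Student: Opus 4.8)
The plan is to reduce everything to an explicit computation for skew Brownian motion by exploiting the fact that the spatial rescaling $\sigma$ conjugates $Y^{(\alpha^*)}$ to the simpler process $B^{(\alpha^*)}$. First I would observe that, because $\sigma$ is a strictly increasing bijection of $\r$ onto $\r$ that maps $0$ to $0$, the hitting time $T_y$ of level $y$ by $Y^{(\alpha^*)}$ started at $-y$ equals the hitting time of level $\sigma^{-1}(y) = y/\sqrt{D^+}$ by $B^{(\alpha^*)}$ started at $\sigma^{-1}(-y) = -y/\sqrt{D^-}$, and symmetrically for the reversed arrangement. So part (b) is equivalent to the inequality
\[
P_{-y/\sqrt{D^-}}\bigl(\tau_{y/\sqrt{D^+}} > t\bigr) \;\le\; \frac{\sqrt{D^-}}{\sqrt{D^+}}\, P_{y/\sqrt{D^+}}\bigl(\tau_{-y/\sqrt{D^-}} > t\bigr),
\]
where $\tau$ denotes hitting times of $B^{(\alpha^*)}$. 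Part (a) I would handle first, essentially by quoting the cited construction: one checks that the generator of $Y^{(\alpha^*)}$ is $\tfrac12\partial_y(D(y)\partial_y\,\cdot\,)$ on smooth functions off the interface, and that the domain of the generator encodes exactly the flux-continuity condition $D^+\partial_y c(t,0^+) = D^-\partial_y c(t,0^-)$; this is the standard identification of skew Brownian motion with the Feller diffusion having this interface condition, and the specific value $\alpha^* = \sqrt{D^+}/(\sqrt{D^+}+\sqrt{D^-})$ is precisely the one making the flux-continuity (rather than derivative-continuity) condition hold. The key algebraic fact I will need is that for $B^{(\alpha)}$ started at $x\neq 0$, the probability of hitting a level on the far side of $0$ before a level on the near side is governed by a scale function which is linear with slope proportional to $\alpha$ on $(0,\infty)$ and slope proportional to $1-\alpha$ on $(-\infty,0)$.

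The central step is a path-decomposition argument at the first visit to $0$. Starting $B^{(\alpha^*)}$ at $-y/\sqrt{D^-}<0$, the level $y/\sqrt{D^+}>0$ cannot be reached without first hitting $0$; let $S$ be that first hitting time of $0$. By the strong Markov property, conditionally on $\{S\le t\}$ and on $S = s$, the remaining time to reach $y/\sqrt{D^+}$ has the law of $\tau_{y/\sqrt{D^+}}$ for $B^{(\alpha^*)}$ started at $0$. The same decomposition applies to the reversed experiment: starting at $y/\sqrt{D^+}>0$, hit $0$ at time $S'$, then run to $-y/\sqrt{D^-}$. So the problem reduces to comparing, for $B^{(\alpha^*)}$ started at $0$, the tail of the time to reach $a:=y/\sqrt{D^+}$ versus the time to reach $-b:=-y/\sqrt{D^-}$ — together with a comparison of the hitting-time-of-$0$ laws from $-b$ and from $a$. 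For the first comparison I would use the excursion-theoretic description built into the definition (1.1): from $0$, the process $B^{(\alpha^*)}$ performs i.i.d.\ excursions of $|B|$, each sent to the positive side with probability $\alpha^*$ and the negative side with probability $1-\alpha^*$; reaching level $a>0$ requires a positive excursion exceeding $a$, reaching $-b<0$ requires a negative excursion exceeding $b$. Because the excursion measure of $|B|$ has the scaling $n(\text{height} > h) = c/h$, the "effective rate" of a successful upcrossing of $a$ is $\alpha^*/a$ and of a successful downcrossing of $-b$ is $(1-\alpha^*)/b$; and here $\alpha^*/a = \sqrt{D^+}/(\sqrt{D^+}+\sqrt{D^-}) \cdot \sqrt{D^+}/y$ while $(1-\alpha^*)/b = \sqrt{D^-}/(\sqrt{D^+}+\sqrt{D^-})\cdot\sqrt{D^-}/y$, whose ratio is exactly $D^+/D^- > 1$. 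This is what forces $\tau_{-b}$ from $0$ to be stochastically larger than $\tau_a$ from $0$, and tracking the constant carefully gives the factor $\sqrt{D^-}/\sqrt{D^+}$ rather than merely $D^-/D^+$; the strict inequality $\sqrt{D^-}/\sqrt{D^+} < 1$ then yields the final strict bound.

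The hard part will be making the excursion-rate heuristic into a rigorous identity with the correct constant — in particular, showing that the \emph{entire tail} $P_0(\tau_a > t)$, not just an asymptotic rate, satisfies the clean multiplicative comparison $P_0(\tau_{-b} > t) = $ (something) $\cdot P_0(\tau_a > t)$, or at least a one-sided inequality of this form uniformly in $t$. I expect this comes down to a scaling/time-change identity: under the Brownian scaling $x\mapsto \lambda x$, $t\mapsto \lambda^2 t$, skew Brownian motion with parameter $\alpha$ is invariant, so $P_0(\tau_a>t)$ depends on $(a,t)$ only through $t/a^2$ and on the parameter; comparing the $a$-experiment (parameter $\alpha^*$) with the $b$-experiment (parameter $1-\alpha^*$) then requires relating $B^{(1-\alpha^*)}$ to $B^{(\alpha^*)}$ by reflection $x\mapsto -x$. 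Pushing this through, $P_0(\tau_{-b}>t)$ for parameter $\alpha^*$ equals $P_0(\tau_{b}>t)$ for parameter $1-\alpha^*$, and one is left comparing two one-sided hitting tails from $0$ with the \emph{same} barrier-crossing structure but different selection probabilities and different barriers $a$ versus $b$ — which, after the substitution $a/b = \sqrt{D^-}/\sqrt{D^+}$ and $\alpha^*/(1-\alpha^*) = \sqrt{D^+}/\sqrt{D^-}$, collapses to an identity. I would also double-check the edge case $t=0$ (both sides equal $1$, consistent with the weak inequality) and the behavior as $t\to\infty$ (both tails vanish, and the ratio tends to the stationary-scale ratio $\sqrt{D^-}/\sqrt{D^+}$), which serves as a useful sanity check on the constant.
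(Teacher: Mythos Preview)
The paper does not actually prove Theorem~\ref{thmasympassagetime} in the text; it attributes the result to \cite{Ramirez06} and \cite{App09a} and remarks only that ``the proof of part~(b) of the theorem relies on a transformation to elastic skew Brownian motion to eliminate the drift.'' So there is no detailed argument here to match against, but the technique flagged --- passing to an elastic (i.e., locally killed) skew Brownian motion --- is \emph{not} the strong-Markov-at-zero plus excursion/scaling route you outline. Your reduction via $\sigma^{-1}$ to hitting times of $B^{(\alpha^*)}$ is correct and is the natural first step in either approach.

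There is, however, a genuine gap in your proposal, and your own sanity check should have exposed it. At $t=0$ both tails equal $1$, so the first inequality in part~(b) would read $1 \le \sqrt{D^-}/\sqrt{D^+} < 1$, which is false; the same failure persists for all small $t>0$, since from either starting point the process has had essentially no chance to cross the interface and both tails are still close to $1$. Thus the displayed inequality, taken literally for all $t\ge 0$, cannot hold as written; the correct statement (as in the cited references) involves either a drift, a different placement of the constant, or a stochastic-ordering conclusion without the multiplicative factor. You wrote that the $t=0$ case is ``consistent with the weak inequality,'' which is simply incorrect and should have been a red flag.

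This same issue undermines your decomposition strategy. Splitting at the first visit to $0$ gives two stages, and they pull in opposite directions: with $a=y/\sqrt{D^+}<b=y/\sqrt{D^-}$, the first stage (hitting $0$ from $-b$ versus from $a$) is \emph{longer} in the fine-to-coarse experiment, while only the second stage (from $0$ to $a$ versus from $0$ to $-b$) favors it. For small $t$ the first-stage effect dominates, which is exactly why the inequality fails there. Your hoped-for ``collapse to an identity'' via scaling and reflection does not materialize either: after reflecting $B^{(\alpha^*)}\mapsto B^{(1-\alpha^*)}$ and rescaling, you are left comparing $P_0^{(\alpha^*)}(\tau_1 > t/a^2)$ with $P_0^{(1-\alpha^*)}(\tau_1 > t/b^2)$, which have \emph{both} different skewness parameters and different time arguments, and no further symmetry identifies them. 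To repair part~(b) you should go back to the cited sources for the precise statement actually proved there, and if you want an independent argument, the elastic-boundary (local-time killing) transform is the tool the authors point to.
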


\vskip .15in

 Observe, for example,  that by integrating the  complementary distribution functions in (b), one obtains that the mean breakthrough time in fine to coarse media is {\it smaller} than that of the mean breakthrough time a coarse to fine media.  
  Related phenomena and  results on dispersion 
in this context, including the case $v\neq 0$,
 are also given in \cite{Ramirez06}, \cite{Ramirez08}, \cite{appthesis}, 
\cite{App09a}, \cite{Ramirez11a}, \cite{quastel}.  
The proof of part (b) of the theorem relies on a transformation to 
 {\it elastic skew Brownian motion} to eliminate the drift.   
In addition, a recently obtained formula for the first passage time distribution for skew Brownian motion is given in \cite{AppShel}. 
Considerations of both local time and elastic standard Brownian motion appear in \cite{durrett_restrepo} to cite another biological context for their significance. 
 
The identification of the stochastic particle motions in the presence
of an interface for simulation can have utility in problems involving the computation of particle concentration at a single spatial location, e.g., for so-called resident breakthrough,  since other pde numerical schemes generally involve computation of the entire concentration curve.   Some illustrative
results pertaining to Monte-Carlo simulations of skew diffusions are described in  \cite{Lejay08} and references therein.   In addition, the identification of skew diffusion answers the basic physics question of
finding the particle motion that Jean Perrin would have reported had there been an interface !

From the macro-scale quantity of particle concentrations, 
the determination of $\alpha^*$ in Theorem \ref{thmasympassagetime} may be viewed as
the result of mass conservation.  The interface condition is simply continuity of flux at the 
interface.  However, as will be illustrated by examples in the next section, it is {\it not} necessary that 
continuity of flux always be obeyed in the 
presence of interfaces.  Moreover, even the modeling may 
be at the scale of a
single particle, e.g., biological dispersion of individual animals,  in which `particle concentrations' are not relevant, e.g., see \cite{ovas}, \cite{mc_lewis}, \cite{cos_can} for such examples.  A stochastic particle
model plays an essential role in such situations. 

\section{Related Examples from Other Scientific Fields}

As illustrated by the examples in this section, the role of interfacial phenomena is of much broader
interest than suggested by  advection-dispersion experiments.  However the specific 
nature of the interface can vary, depending on the specific phenomena.    We briefly describe below  three distinct 
classes of examples  of phenomena from the biological/ecological sciences in which such interfaces naturally occur, 
together with compelling questions having substantial biological implications.

\vskip .05in
\noindent{\bf Example B} ({\it\bf Coastal  Upwelling and Fisheries}).  Up-wellings,  the movement of deep nutrient 
rich waters to the sun-lit ocean surface,  occur in roughly
one percent of the ocean but are responsible for nearly fifty-percent of the worlds fishing industry.
The up-welling along the  Malvinas current that occurs off of the coast of Argentina
(see Figure \ref{coastal}) is unusual
in that it is the result of  a very sharp break in the shelf, rather than being driven by winds.  The highlighted points in the figure
represent a flotilla of fishing boats concentrated along the shelf where the up-welling occurs. 
\begin{figure}
\begin{center}
\includegraphics[scale=0.4]{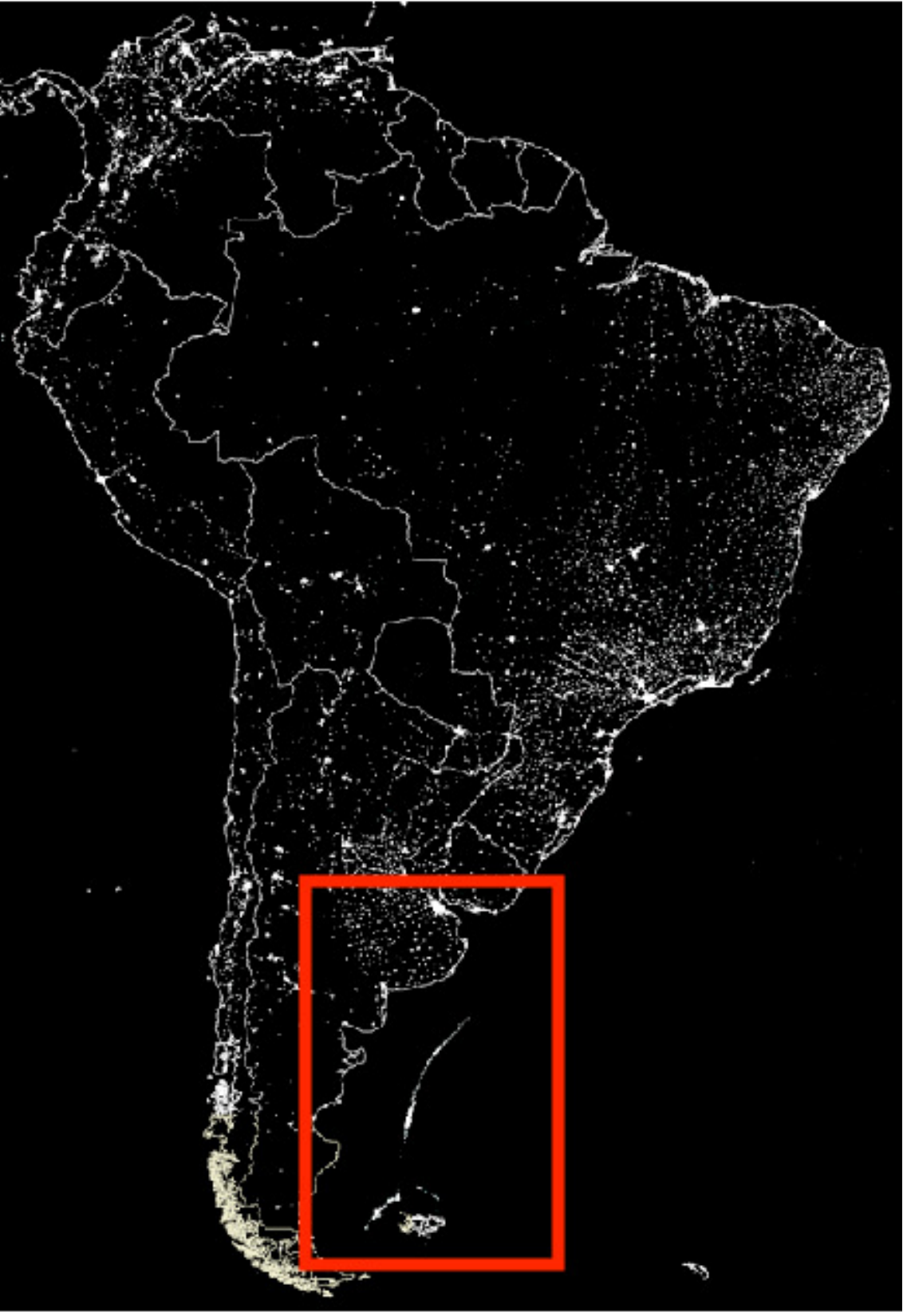}
\end{center}
\caption{\label{coastal} Fishing vessels off the coast of Argentina, 
http://subdude-site.com/WebPics/WebPicsEarthLights/south-america-hilights-429x600.jpg
courtesy  of Ricardo Matano, COAS, Oregon State University}
\end{figure}
 The equation for the free surface elevation $\eta$
as a function of spatial variables $(x,y)$ is derived from principles of geostrophic balance and takes
the  form
\begin{equation}
{\partial\eta\over\partial y} =  -{r\over f}\left({\partial h\over\partial x}\right)^{-1}{\partial^2\eta\over\partial x^2},
\end{equation}
where $r > 0$ and $f < 0$ in the  southern hemisphere, and $h(x)$ is the depth of the ocean at a distance $x$ from 
the shore.  In particular, the sharp break in the shelf makes $h^\prime(x)$ a piecewise constant function
with  positive values $H^+, H^-$. The location of the interface coincides with the  distance to the shelf-break.  In particular, 
if the spatial variable $y > 0$ is viewed as a `time'  parameter, then this is a skew-diffusion equation,
but the physics imply continuity of the derivatives $\partial\eta/\partial x$ at the interface rather than `flux' (see  \cite{Matano} and references therein).

\vskip .05in
\noindent{\bf Example C} ({\it\bf Fender's Blue Butterfly}).  The Fenders Blue is an endangered species of butterfly found
in the pacific northwestern United States.  The primary habitat patch is Kinkaid's Lupin flower
(Figure \ref{butterfly}).

\begin{figure}[htb]
\label{butterfly}
\begin{center}
    \includegraphics[scale=0.7]{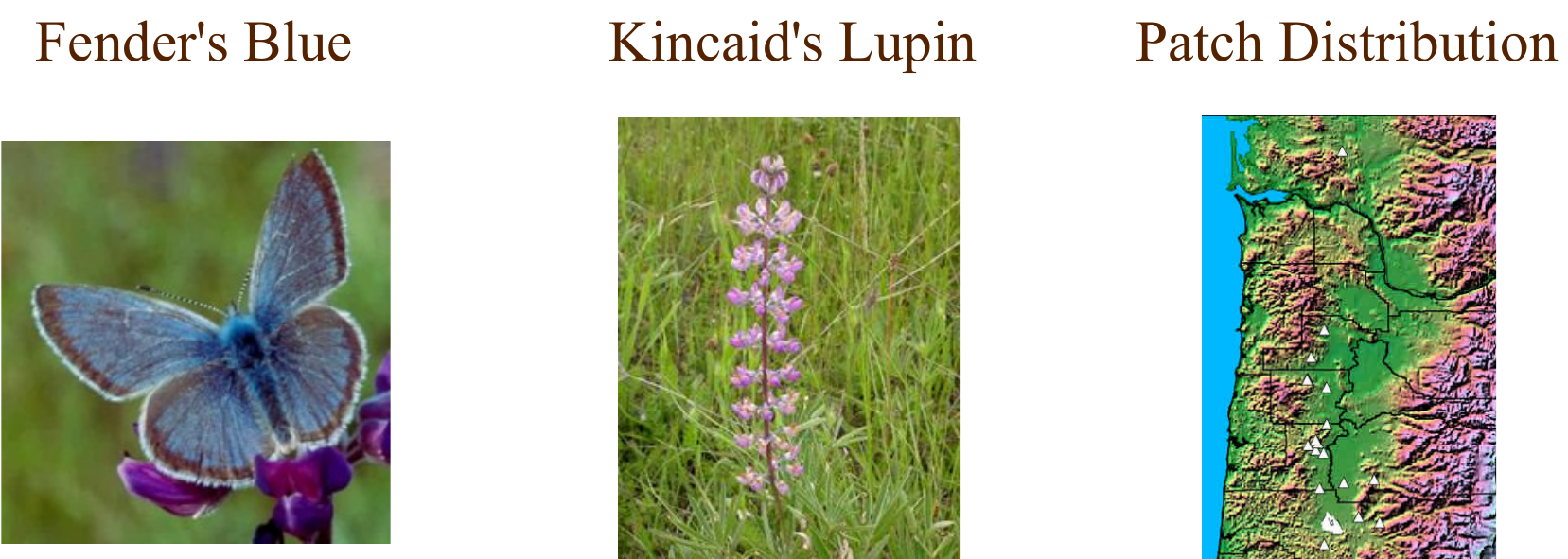}
  \end{center}
\caption{Fender's Blue Butterfly Habitat - Kincaid's Lupin Patches - U.S. Fish and Wildlife Service,Wikipedia http://en.wikipedia.org/wiki/Fender\%27s\_blue\_butterfly, http://www.fws.gov/oregonfwo/Species/PrairieSpecies/gallery.asp}
\end{figure}

   Quoting 
\cite{Schultz},  {\it \lq\lq Given past research on the Fender's blue,  and the potential to investigate response to
patch boundaries, we ask two central questions.  First,  how do organisms respond to habitat edges? Second,
what are the  implications of this behavior  for residence times?\rq\rq}  Sufficiently long residence (occupation) times
in Lupin patches  are required for pollination, eggs, larvae and ultimate sustainability of the population. 
Empirical evidence points  to a skewness in random walk models for butterfly movement at the path boundaries.  The
determination of proper interface conditions is primarily a statistical problem in this application, however we will see
that certain theoretical qualitative analysis may be possible for  setting ranges on interface transmission parameters.
 There is a rapidly growing literature on the  statistical estimation of parameters for 
diffusions, \cite{nakahiro}, \cite{sorensen}, \cite{ait}.   However much (though not all) of this literature is motivated by applications to dispersive models in finance for which the  coefficients are presumed smooth and the data is high frequency.

\vskip .05in
\noindent{\bf Example D} ({\it\bf Sustainability on a River Network}). The movement of larvae in a river system is often modeled by
advective-dispersion equations  in which the rates are determined by hydrologic/geomorphologic relationships in the form
of the  so-called {\it Horton laws}.   In general river networks (Figure \ref{rivernetwork})
 are modeled as directed binary tree graphs and each junction
may be viewed as an interface. There are also special relations known from geomorphology  and hydrology
that can be applied to narrow the class of graphs observed in natural river basins;  e.g., 
see \cite{rinrod}, \cite{peckham}, \cite{gupta}, and references therein.

\begin{figure}[htb]
\label{rivernetwork}
\begin{center}
    \includegraphics[scale=0.7]{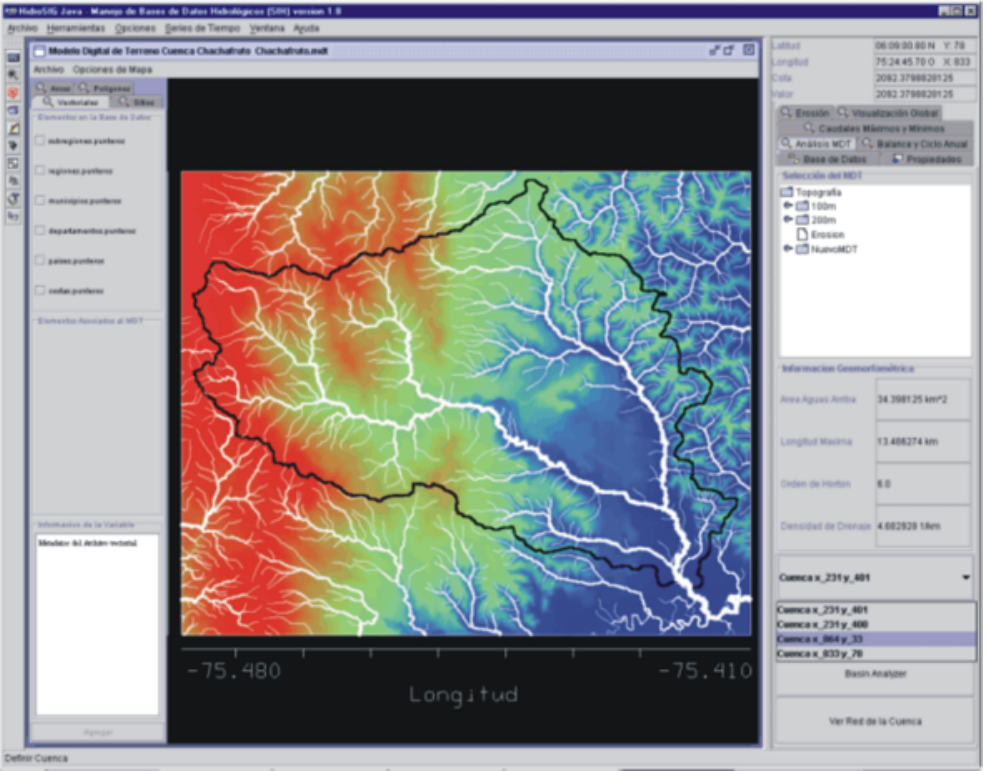}
  \end{center}
\caption{Digitally Generated River Network Using HidroSIG Java;  courtesy of Jorge Ramirez }
\end{figure}
  
Conservation of mass leads to continuity of  flux of larvae across each stream junction as
the  appropriate interface condition.   Problems on sustainability in this context are generally formulated in terms of 
network size and characteristics relative to the production of larvae sufficient to prevent permanent downstream removal
at low population sizes; see 
 \cite{Ramirez11b}  for recent results in the case of a river network, and
 \cite{okada} and \cite{friedlin} for related mathematical considerations.


\section{Natural Dispersion and Natural Occupation Time}

The following theorem provides a useful summary in one-dimension
of the interplay between diffusion coefficients 
 and broader classes of possible 
interfacial conditions.    To set the stage we begin with Brownian motion $X = \{X(t) \equiv \sqrt{D}B(t) + vt:t\ge 0\}$  with constant diffusion coefficient $D > 0$
and constant drift $v$;  
i.e., $B = \{B(t):t\ge 0\}$ is standard Brownian motion starting at zero, 
with unit diffusion coefficient and zero drift.   The stochastic process
$X$ has a number of properties characteristic of diffusion.  Namely, the random path  $t\to X(t)$ is continuous with 
probability one, and 
\begin{equation}
\label{semimart}
\mathbb{E}(X(t) |X(u), u\le s) = X(s) + v(t-s), \qquad 0\le s < t.
\end{equation}
This makes $X$ a continuous semimartingale.  In the case $v=0$, $X$ is in fact said to be a (continuous) martingale.   In general a continuous
semimartingale is a process with continuous paths that differs from 
a (local) martingale by a (unique)
 continuous (adapted) process having finite 
total variation (in the sense of functions of bounded variation).  Such
structure is a natural consequence of the interfaces discussed in this
article. This will be made clear at equation (\ref{skewsde}) below. 

The Brownian motion $X$ is also a Markov process with (homogeneous) transition probabilities 
\begin{equation}
p(t,x,y) = {1\over \sqrt{2\pi Dt}}e^{-{(y-x-vt)^2\over 2Dt}}, \qquad t\ge 0,
\end{equation}
satisfying the advection-dispersion equation 
\begin{equation}
{\partial p\over\partial t} = {\partial\over\partial y}({D\over 2}{\partial p\over\partial y}) - {\partial vp\over\partial y}.
\end{equation}
with continuous derivatives of all orders everywhere.
Since the
displacements $X(t)-X(s)$ are independent for disjoint intervals $(s,t)$,  it follows that the variance of $X(t)$ is linear in  $t$  
and, in particular,  $\mathbb{E}(X(t) - vt)^2 = Dt, t\ge 0.$   The definition of {\it mathematical local time at $a$}, 
denoted $\ell_t^X(a)$, for $X$ can be expressed
as $d\ell_t^X(a)$ is the  amount of time $X$ spends in the infinitesimal neighborhood
 $(a,a \pm da)$ prior to time $t$.  More precisely,
\begin{equation}
\ell^X_t(a) = \lim_{\epsilon\downarrow 0}{D\over 2\epsilon}\int_0^t1_{(a-\epsilon, a+\epsilon)}(X(s))ds.
\end{equation}
Note the presence of the diffusion coefficient $D$. Thus, in the context of dispersion of particles
in a fluid, for example, $[D] = L^2/T$ and $ [\epsilon] =[X]  = L$ yields units $[\ell_t^X] = {1\over L}\times{L^2\over T}\times T = L.$

The standard extension of the mathematical definition of local time for a continuous
semimartingale $X$
 exploits the 
{\it quadratic variation} $<X>_t, t\ge 0,$ of the process.  If $X$ is
a square-integrable martingale then $<X>_t$ is 
 defined by the property that the process
$X^2(t) - <X>_t, t\ge 0,$ is a martingale; see \cite{revuz_yor}.  So, in the case of the Brownian motion one may check that
$<X>_t = Dt, t\ge 0.$  In the case of a continuous semimartingale, the
quadratic variation is quadratic variation of its martingale component.

 Mathematical local time at $a$ can be defined as an increasing continuous
stochastic process $\ell_{a}^X(t), t\ge 0$ such that
\begin{equation}
|X(t) - a| = |X(0) -a| +\int_0^t \sgn(X(s)-a)dX(s) + \ell_t^X(a), \qquad t\ge 0,
\end{equation}
where $\sgn(x) = x/|x|, x\neq 0,$ and $\sgn(0) = -1$ (by convention).  
For purposes of calculation it is often convenient to consider 
(right and left) one-sided  versions defined by
\begin{equation}
\ell_t^{X,+}(a) = \lim_{\epsilon\downarrow 0}{1\over\epsilon}\int_0^t1_{[a,a+\epsilon)}(X(s))d<X>_s,\qquad t\ge 0,
\end{equation}
\begin{equation}
\ell_t^{X,-}(a) = \lim_{\epsilon\downarrow 0}{1\over\epsilon}\int_0^t1_{(a-\epsilon,a]}(X(s))d<X>_s, \qquad t\ge 0.
\end{equation}
Then
\begin{equation}
\ell_t^{X}(a) = {\ell_t^{X,+}(a) + \ell_t^{X,-}(a)\over 2}, \qquad t\ge 0.
\end{equation}
The utility of these quantities rests in the following two formulae:

\noindent{\bf It\^o-Tanaka Formula:}  If $X$ is a continuous 
semimartingale  with local time $\ell_t^X(a), a\in\mathbb{R}, t\ge 0$,
and $f$ is the difference of two convex functions
then
\begin{equation}
f(X(t)) = f(X(0)) + \int_0^t f_-^\prime(X(s))dX(s) + {1\over 2}\int_{\mathbb R}\ell_t^{X,+}(a)f^{\prime\prime}(da),
\end{equation}
where $f^{\prime\prime}(da)$ is a positive measure corresponding
to the second derivative of $f$ in the sense of distributions.
\vskip .15in
\noindent{\bf Occupation Time Formula:} For a non-negative
 Borel function $g$ and $t\ge 0$, one has a.e. that
 \begin{equation}
  \int_0^t g(X(s))d<X>_s = \int_{\mathbb R} g(a)d\ell_t^X(a).
\end{equation}

\vskip .15in
\begin{theorem}
\label{thmalphamart}
 Let $D^+, D^-$ be arbitrary positive numbers and let  $0 <\alpha,  \lambda < 1$.    
Define $Y^{(\alpha)}(t) = \sigma(B_t^{(\alpha)}), t\ge 0$, where $B^{(\alpha)}$ is skew Brownian motion with transmission
parameter $\alpha$ and $\sigma(x) = \sqrt{D^+}x1_{[0,\infty)}(x) + \sqrt{D^-}x1_{(-\infty,0]}(x), \ x\in {\mathbb R}$.
Then 
$$M(t) = f(Y^{(\alpha)}(t)) - {1\over 2}\int_0^t D(Y^{(\alpha)}(u))f^{\prime\prime}(Y(u))du, \quad t\ge 0,$$
is a martingale for all $f\in {\cal D}_\lambda = \{f\in C^2({\mathbb R}\backslash \{0\})\cap C({\mathbb R}) :  \lambda f^\prime(0^+) 
= (1-\lambda)f^\prime(0^-)\}$ if and only if 
$$\alpha = \alpha(\lambda)  = { \lambda\sqrt{D^-}\over \lambda\sqrt{D^-} + (1-\lambda)\sqrt{D^+}}.$$
\end{theorem}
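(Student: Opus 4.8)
The plan is to obtain an explicit semimartingale decomposition of $M$ from a single application of the It\^o--Tanaka formula to $B^{(\alpha)}$, and then to force the finite-variation part to vanish. Write $g=f\circ\sigma$, so that $f(Y^{(\alpha)}(t))=g(B^{(\alpha)}(t))$. Since $\sigma$ is linear on each half-line with $\sigma(0)=0$, the function $g$ is $C^2$ on $\mathbb R\setminus\{0\}$, continuous at $0$, with one-sided derivatives $g'(0^{\pm})=\sqrt{D^{\pm}}\,f'(0^{\pm})$; in particular $g$ is a difference of two convex functions and its distributional second derivative is $g''(da)=g''(a)\,da$ on $\mathbb R\setminus\{0\}$ plus the atom $\bigl(g'(0^+)-g'(0^-)\bigr)\delta_0(da)$, where $g''(x)=D^+f''(\sqrt{D^+}x)$ for $x>0$ and $g''(x)=D^-f''(\sqrt{D^-}x)$ for $x<0$. (Throughout we assume the mild regularity of $f$ that makes $M$ well defined, e.g. $f''$ locally integrable at $0$.)

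Next I would use the Harrison--Shepp description of $B^{(\alpha)}$ as the solution of $dB^{(\alpha)}(t)=dW(t)+(2\alpha-1)\,d\ell^0_t$, where $W$ is a standard Brownian motion and $\ell^0$ is the symmetric local time of $B^{(\alpha)}$ at $0$, together with the identity
$$\ell_t^{B^{(\alpha)},+}(0)=2\alpha\,\ell^0_t,\qquad t\ge 0,$$
which follows from the standard relation $\ell^{B^{(\alpha)},+}_t(0)-\ell^{B^{(\alpha)},-}_t(0)=2\int_0^t1_{\{0\}}(B^{(\alpha)}_s)\,dB^{(\alpha)}_s=2(2\alpha-1)\ell^0_t$ (the $dW$ part has zero quadratic variation, as $B^{(\alpha)}$ spends no Lebesgue time at $0$) combined with $\ell^{B^{(\alpha)},+}_t(0)+\ell^{B^{(\alpha)},-}_t(0)=2\ell^0_t$. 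Applying the It\^o--Tanaka formula to $g$ and $B^{(\alpha)}$: the stochastic integral $\int_0^t g'_-(B^{(\alpha)}_s)\,dB^{(\alpha)}_s$ splits into a $dW$-local martingale $N_t$ plus $(2\alpha-1)g'(0^-)\ell^0_t$ (because $d\ell^0$ is carried by $\{B^{(\alpha)}=0\}$); the absolutely continuous part of $g''(da)$ contributes, via the occupation-time formula and $\langle B^{(\alpha)}\rangle_s=s$, exactly $\tfrac12\int_0^tD(Y^{(\alpha)}_s)f''(Y^{(\alpha)}_s)\,ds$; and the atom of $g''(da)$ contributes $\tfrac12\bigl(g'(0^+)-g'(0^-)\bigr)\ell^{B^{(\alpha)},+}_t(0)=\alpha\bigl(\sqrt{D^+}f'(0^+)-\sqrt{D^-}f'(0^-)\bigr)\ell^0_t$. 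Cancelling the $\tfrac12\int D f''$ term against the one subtracted in $M$ gives
$$M(t)=M(0)+N(t)+C_\alpha(f)\,\ell^0_t,\qquad C_\alpha(f):=\alpha\sqrt{D^+}\,f'(0^+)-(1-\alpha)\sqrt{D^-}\,f'(0^-),$$
with $N$ a continuous local martingale (a genuine martingale once $f'$ satisfies a mild bound, e.g. is bounded).

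Because $t\mapsto\ell^0_t$ is continuous, nondecreasing and a.s. nonconstant, and the decomposition of a continuous semimartingale into local-martingale and finite-variation parts is unique, $M$ is a (local) martingale if and only if $C_\alpha(f)=0$. Hence $M$ is a martingale for every $f\in\mathcal D_\lambda$ if and only if $C_\alpha(f)=0$ whenever $\lambda f'(0^+)=(1-\lambda)f'(0^-)$. Substituting $f'(0^-)=\tfrac{\lambda}{1-\lambda}f'(0^+)$ yields $C_\alpha(f)=f'(0^+)\bigl(\alpha\sqrt{D^+}-\tfrac{(1-\alpha)\lambda\sqrt{D^-}}{1-\lambda}\bigr)$, and since $\mathcal D_\lambda$ contains functions with $f'(0^+)\ne0$ (e.g. the piecewise-linear $f$ equal to $(1-\lambda)x$ for $x\ge0$ and $\lambda x$ for $x\le0$), this vanishes for all such $f$ iff $\alpha(1-\lambda)\sqrt{D^+}=(1-\alpha)\lambda\sqrt{D^-}$, i.e. iff $\alpha=\lambda\sqrt{D^-}/\bigl(\lambda\sqrt{D^-}+(1-\lambda)\sqrt{D^+}\bigr)=\alpha(\lambda)$; conversely, at $\alpha=\alpha(\lambda)$ the bracket is $0$, so $C_\alpha(f)\equiv 0$ on $\mathcal D_\lambda$ and $M=M(0)+N$. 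The step needing the most care is the bookkeeping of the three contributions supported on $\{B^{(\alpha)}=0\}$ — in particular pinning down the constant $2\alpha$ in $\ell^{B^{(\alpha)},+}_t(0)=2\alpha\ell^0_t$, since this is the only place where the transmission parameter enters asymmetrically — with the local- versus true-martingale distinction being a secondary, standard integrability caveat on the class of test functions.
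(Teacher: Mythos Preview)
Your proof is correct and uses the same core ingredients as the paper --- the Harrison--Shepp/Le Gall SDE for $B^{(\alpha)}$, the relation $\ell_t^{B^{(\alpha)},+}(0)=2\alpha\,\ell^0_t$, the It\^o--Tanaka formula, and the occupation-time formula --- arriving at the identical finite-variation coefficient $\alpha\sqrt{D^+}f'(0^+)-(1-\alpha)\sqrt{D^-}f'(0^-)$. The execution differs slightly: the paper first applies It\^o--Tanaka to $(Y^{(\alpha)})^\pm$ to derive an SDE for $Y^{(\alpha)}$ itself (their equation for $dY^{(\alpha)}$), relates the one-sided local times of $Y^{(\alpha)}$ to those of $B^{(\alpha)}$, and then applies It\^o--Tanaka a second time to $f(Y^{(\alpha)})$; you instead compose $g=f\circ\sigma$ and apply It\^o--Tanaka once directly to $g(B^{(\alpha)})$, bypassing the intermediate SDE and the $Y^{(\alpha)}$-local-time bookkeeping. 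Your route is a bit more economical for this particular theorem; the paper's route has the side benefit of recording the semimartingale decomposition of $Y^{(\alpha)}$ explicitly, which is of independent interest elsewhere in the article.
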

\vskip .15in
\begin{proof}
In the case of skew Brownian motion one has the following relationships between 
mathematical local time at zero and its one-sided variants, e.g., see \cite{Ouknine}
\begin{equation}
\label{skwbmlocrel}
\ell_t^{B^{(\alpha),+}}(0) = 2\alpha\ell_t^{B^{(\alpha)}}(0), 
\qquad \ell_t^{B^{(\alpha),-}}(0) = 2(1-\alpha)\ell_t^{B^{(\alpha)}}(0), \quad t\ge 0.
\end{equation}
Moreover, $B^{(\alpha)}$ is the unique strong solution to the stochastic differential
equation, see \cite{LeGall},
\begin{equation}
\label{skewsde}
dB^{(\alpha)}(t) = {2\alpha-1\over 2\alpha}d\ell_t^{B^{(\alpha)},+}(0) +
dB(t).
\end{equation}
In particular, considering the integrated version, 
$B$ is the martingale component and 
${2\alpha-1\over 2\alpha}\ell_t^{B^{(\alpha)}}$ is the finite variation
component in the view of $B^{(\alpha)}$ as a semimartingale.

It is also straightforward to relate mathematical local times of $Y^{(\alpha)}$ and $B^{(\alpha)}$
through the one-sided formulae as:
\begin{equation*}
\ell_t^{Y^{(\alpha)},+}(0) = \sqrt{D^+}\ell_t^{B^{(\alpha)},+}(0),\qquad 
\ell_t^{Y^{(\alpha)},-}(0) = \sqrt{D^-}\ell_t^{B^{(\alpha)},-}(0)
\end{equation*}
Applying the It\^o-Tanaka formula to the positive and negative parts of $Y^{(\alpha)}$ together with (\ref{skewsde}), one has
\begin{equation*}
d(Y^{(\alpha)}(t))^+ = \sqrt{D^+}\{1_{[B^{(\alpha)}(t)> 0]}dB(t) 
+ {1\over 2}d\ell_t^{B^{(\alpha)},+}(0)\},
\end{equation*}
and
\begin{equation*}
d(Y^{(\alpha)}(t))^- = \sqrt{D^-}\{-1_{[B^{(\alpha)}(t)\le 0]}dB(t) 
- {(2\alpha-1)\over 2\alpha}d\ell_t^{B^{(\alpha)},+}(0)+ {1\over 2}d\ell_t^{B^{(\alpha)},+}(0)\}.
\end{equation*}
Thus, since $Y^{(\alpha)}$ is the difference of its positive and negative parts and noting the cancellation,  
one has
\begin{equation}
\label{natdiffsde}
dY^{(\alpha)}(t) = \sqrt{D(B^{(\alpha)}(t))}dB(t) + 
\left(
\frac{\sqrt{D^+}-\sqrt{D^-}}{2}
+\sqrt{D^-}\frac{2\alpha-1}{2\alpha}\right)
d\ell_t^{B^{(\alpha)},+}(0).
\end{equation}
For a difference of convex functions $f\in{\cal D}_\lambda$,
\begin{equation*}
f^{\prime\prime}(da) = f^{\prime\prime}(a)da + (f^\prime(0^+)-f^\prime(0^-)\delta_0(da).
\end{equation*}
With these preliminaries, again use the It\^o-Tanaka theorem
together with (\ref{skwbmlocrel}) and (\ref{natdiffsde}), to get
\begin{eqnarray}
\nonumber
f(Y^{(\alpha)}(t)) &=& f(Y^{(\alpha)}(0)) + \int_0^tf_-^\prime(Y^{(\alpha)}(s))dY^{(\alpha)}(s)
+ {1\over 2}\int_\mathbb{R}\ell_t^{Y^{(\alpha)},+}(a)f^{\prime\prime}(da)\\
\nonumber
&=& f(Y^{(\alpha)}(0)) + \int_0^tf_-^\prime(Y^{(\alpha)}(s))\sqrt{D(Y^{(\alpha)}(s))}dB(s)\\
\nonumber
&+& {\alpha\sqrt{D^+} -(1-\alpha)\sqrt{D^-}\over2\alpha}\int_0^tf_-^\prime(Y^{(\alpha)}(s))d\ell_s^{B^{(\alpha)},+}(0)\\
\nonumber
&+& {1\over 2}\int_\mathbb{R}\ell_t^{Y^{(\alpha)},+}(a)f^{\prime\prime}(a)da\\
\label{marteqn}
&+&{1\over 2}\int_\mathbb{R}\ell_t^{Y^{(\alpha)},+}(a)(f^\prime(0^+)-f^\prime(0^-)\delta_0(da).
\end{eqnarray} 
According to the mathematical occupation time formula relating
mathematical occupation time and mathematical local time,
 and and noting
that the quadratic variation is given by
$d<Y^{(\alpha)}>_s = D(Y^{(\alpha)}(s))ds$, it also follows that
\begin{equation*}
{1\over 2}\int_\mathbb{R}f^{\prime\prime}(a)d\ell_t^{Y^{(\alpha)},+}(a) = {1\over 2}\int_0^tD(Y^{(\alpha)}(s))f^{\prime\prime}(Y^{(\alpha)}(s))ds.
\end{equation*}
The asserted 
result now follows  by subtracting this term from each side of the
equation (\ref{marteqn}) and noting  the cancellation of local time terms leaving a
stochastic integral with respect to Brownian motion when and only when $\alpha = \alpha(\lambda)$, i.e., 
\begin{eqnarray*}
&\ &{\alpha\sqrt{D^+} -(1-\alpha)\sqrt{D^-}\over2\alpha}\int_0^tf_-^\prime(Y^{(\alpha)}(s))d\ell_s^{B^{(\alpha)},+}(0) + 
{1\over 2}\int_\mathbb{R}\ell_t^{Y^{(\alpha),+}}(a)
(f^\prime(0^+)-f^\prime(0^-))\delta_0(da)\\
&=&  {\alpha\sqrt{D^+} -(1-\alpha)\sqrt{D^-}\over2\alpha}f^\prime(0^-)\ell_t^{B^{(\alpha)},+}(0)
 + 
{1\over 2}\sqrt{D^+}\ell_t^{B^{(\alpha)},+}(0)
(f^\prime(0^+)-f^\prime(0^-))\\
&=& [{\sqrt{D^+}\over 2}f^\prime(0^+) - {1-\alpha\over2\alpha}\sqrt{D^-}f^\prime(0^-)]\ell_t^{B^{(\alpha)},+}(0) = 0
\end{eqnarray*}
if and only if ${1-\alpha\over \alpha}{\sqrt{D^-}\over\sqrt{D^+}}
= {1-\lambda\over\lambda}$.
That this  is enough follows from standard theory, e.g., Theorem 2.4 in \cite{revuz_yor}.
\end{proof}

Some implications for the examples will be described below in terms of the
stochastic particle evolution, however one may also note that one has the following 
consequence at the scale of Kolmogorov's backward equation. 

\begin{corollary}  
Let $D^+, D^-$ be arbitrary positive numbers and let  $0 <\alpha,  \lambda < 1$. Then
for   $c_0\in {\cal D}_\lambda$,  the unique solution to 
$${\partial c\over\partial t} = {1\over 2}D(x){\partial^2 c\over\partial x^2},\quad
\lim_{t\to0^+}c(t,x) = c_0(x), \quad \lambda{\partial c\over\partial x}(t,0^+)
= (1-\lambda){\partial c\over\partial x}(t,0^-), t > 0,$$
is given by
$$c(t,x) = \mathbb{E}_xc_0(Y^{\alpha(\lambda)}(t)), \quad t\ge 0.$$
\end{corollary}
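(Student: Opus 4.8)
The plan is to deduce this as a corollary of Theorem \ref{thmalphamart} via the strong Markov property of $Y^{(\alpha)}$ together with well-posedness of the associated martingale problem. Fix $\lambda\in(0,1)$, set $\alpha=\alpha(\lambda)$, and write $A$ for the operator $Af=\frac{1}{2}D(x)f''$ with domain ${\cal D}_\lambda$. Theorem \ref{thmalphamart} says precisely that $Y^{(\alpha)}$ solves the martingale problem for $(A,{\cal D}_\lambda)$: for every $f\in{\cal D}_\lambda$ and every starting point $x$, the process $f(Y^{(\alpha)}(t))-\int_0^t Af(Y^{(\alpha)}(u))\,du$ is a $\mathbb{P}_x$-martingale (a suitable boundedness or sub-Gaussian growth condition on $f$, to be read into the hypothesis $c_0\in{\cal D}_\lambda$, is what makes this a true martingale and the relevant expectations finite). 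Since $\sigma$ is a strictly increasing continuous bijection of $\mathbb{R}$ with continuous inverse and $B^{(\alpha)}$ is a time-homogeneous strong Markov process, $Y^{(\alpha)}=\sigma(B^{(\alpha)})$ is again a time-homogeneous strong Markov process; denote its transition semigroup by $T_tc_0(x)=\mathbb{E}_x c_0(Y^{(\alpha)}(t))$.

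For existence I would take $f=c_0$ in the martingale identity of Theorem \ref{thmalphamart}, take $\mathbb{P}_x$-expectations, and apply the Markov property to obtain the mild form $T_tc_0=c_0+\int_0^t T_s(Ac_0)\,ds$; differentiating in $t$ and using $AT_t=T_tA$ on the domain yields $\partial_t c=\frac{1}{2}D(x)\partial_x^2 c$ for $t>0$, where $c(t,x)=T_tc_0(x)$. The interface condition $\lambda\partial_x c(t,0^+)=(1-\lambda)\partial_x c(t,0^-)$ is exactly the statement $c(t,\cdot)\in{\cal D}_\lambda$, and the initial condition $\lim_{t\downarrow 0}c(t,x)=c_0(x)$ follows from path continuity of $Y^{(\alpha)}$ together with continuity of $c_0$ and dominated convergence. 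The genuinely technical point, which I expect to be the main obstacle, is the propagation of the domain $T_t({\cal D}_\lambda)\subseteq{\cal D}_\lambda$; it can be handled either by writing out the transition density of $Y^{(\alpha)}$ (equivalently that of skew Brownian motion carried through $\sigma$), which is smooth off the interface and permits verification of the one-sided derivative relation at $0$ by differentiation under the integral, or abstractly by identifying $A$ as the Feller generator of $(T_t)$ on a suitable function space and invoking Hille--Yosida/Feller theory, under which ${\cal D}_\lambda$ is a core left invariant by the semigroup.

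For uniqueness I would argue probabilistically. Suppose $\tilde c$ is any solution in the natural class (bounded, $\tilde c(t,\cdot)\in{\cal D}_\lambda$ for $t>0$, $\partial_t\tilde c=A\tilde c$, $\lim_{t\downarrow 0}\tilde c(t,\cdot)=c_0$). Fix $t>0$ and apply the space--time It\^o--Tanaka formula to $(s,x)\mapsto\tilde c(t-s,x)$ along $Y^{(\alpha)}$: because $\tilde c(t-s,\cdot)\in{\cal D}_\lambda$, the local-time term at the interface cancels exactly as in the proof of Theorem \ref{thmalphamart} (the coefficient multiplying $d\ell_s^{B^{(\alpha)},+}(0)$ in the semimartingale decomposition of $Y^{(\alpha)}$ is matched against the jump $f'(0^+)-f'(0^-)$ precisely when $\alpha=\alpha(\lambda)$), while the remaining finite-variation terms combine to $(-\partial_s+A)\tilde c(t-s,Y^{(\alpha)}(s))\,ds=0$. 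Hence $s\mapsto\tilde c(t-s,Y^{(\alpha)}(s))$, $0\le s\le t$, is a bounded martingale, and equating expectations at $s=0$ and $s=t$ gives $\tilde c(t,x)=\mathbb{E}_x\tilde c(0,Y^{(\alpha)}(t))=\mathbb{E}_x c_0(Y^{(\alpha)}(t))=c(t,x)$. This is equivalent to the assertion that the martingale problem for $(A,{\cal D}_\lambda)$ is well posed --- the same ``standard theory'' (Theorem 2.4 of \cite{revuz_yor}) already cited at the end of the proof of Theorem \ref{thmalphamart} --- which pins down the one-dimensional marginals of $Y^{(\alpha)}$ under $\mathbb{P}_x$ and hence the solution. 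Apart from the domain-regularity bookkeeping flagged above, the corollary is thus a direct transcription of Theorem \ref{thmalphamart}.
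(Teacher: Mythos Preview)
The paper does not actually supply a proof of this corollary: it is stated immediately after Theorem \ref{thmalphamart} as a consequence ``at the scale of Kolmogorov's backward equation'' and left at that. Your proposal therefore cannot be compared line-by-line with a paper argument, but it is precisely the kind of fleshing-out the paper invites, and it is correct in outline.

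Your existence argument (take $\mathbb{P}_x$-expectations in the martingale identity, use the Markov property to get the mild equation $T_tc_0=c_0+\int_0^t T_s(Ac_0)\,ds$, then differentiate) and your uniqueness argument (space--time It\^o--Tanaka along $Y^{(\alpha)}$, the interface local-time term vanishing exactly because $\tilde c(t-s,\cdot)\in\mathcal{D}_\lambda$ and $\alpha=\alpha(\lambda)$) are both the standard route and are consistent with the paper's appeal to ``standard theory, e.g., Theorem 2.4 in \cite{revuz_yor}'' at the close of the proof of Theorem \ref{thmalphamart}. You are also right to flag domain propagation $T_t(\mathcal{D}_\lambda)\subseteq\mathcal{D}_\lambda$ as the one genuine technical chore; the paper is silent on this, and of the two fixes you mention, the explicit transition-density check (via the known density of skew Brownian motion pushed through $\sigma$) is the cleanest here and matches the spirit of the surrounding computations. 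In short, your plan is sound and supplies exactly what the paper omits.
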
 

It is illuminating to consider Theorem \ref{thmalphamart} in the context of the
examples. 

\noindent{\bf Example A}.  Theorem \ref{thmalphamart} provides
 a generalization of the results obtained in 
\cite{Ramirez06} and \cite{App09a}
for the case of advection-dispersion problems across an interface described in Example A. One
may check that  
 \begin{equation}
\lambda = {D^+\over D^+ + D^-}, \qquad
\alpha^* = {\sqrt{D^+}\over\sqrt{D^+} + \sqrt{D^-}}.
\end{equation}
 follow from Theorem \ref{thmalphamart}
 for this application.   This coincides with the results of \cite{Ramirez06} and \cite{App09a}
obtained by other methods.

The following definition is made with reference to both the diffusion coefficient and the interface parameter
in the context of this and the  other examples.

\begin{definition} 
\label{natdiff}
With the choice of $\alpha\equiv\alpha(\lambda)$ given by 
Theorem \ref{thmalphamart}, we
 refer to the process $Y^{(\alpha(\lambda))}$ as the {\it natural diffusion} corresponding to the 
dispersion coefficients $D^+, D^-$ and interface parameter $\lambda$. 
\end{definition}

\noindent{\it\bf Additional Nomenclature:}  We sometimes refer to the natural diffusion corresponding
to $ \lambda = {D^+\over D^+ + D^-}, 
\alpha^* = {\sqrt{D^+}\over\sqrt{D^+} + \sqrt{D^-}}$ as the {\it physical diffusion}.   We refer to the 
diffusion in the case
$\lambda = 1/2, \alpha^\sharp = {\sqrt{D^-}\over\sqrt{D^+} + \sqrt{D^-}}=1-\alpha^*$ as the {\it Stroock-Varadhan diffusion}
since it is the solution to the particular martingale problem originating with these authors; see \cite{SV}.

\vskip .15in

\noindent{\bf Example B}.
Observe that in the application to the coastal up-welling problem one obtains
\begin{equation}
\lambda = 1/2, \qquad \alpha(1/2) = \alpha^\sharp = {\sqrt{D^-}\over\sqrt{D^+} + \sqrt{D^-}}.
\end{equation}
The interface parameter provides continuity of the derivative, rather than of the flux, at the interface.
   The natural diffusion for this
example may be checked to
coincide with the Stroock-Varadhan diffusion in this case.
  Note that the answer to the first passage time problem will be exactly opposite
to that obtained for the advection-dispersion experiments of Example A under this model.

The following modification of the usual notion of mathematical occupation time,
 where the integration
is with respect to quadratic variation  and in units of squared-length,  provides a 
quantity in units of time that we refer to as natural occupation time. \footnote{While we emphasize \lq\lq natural\rq\rq choices from the point
of view of modeling (and units), there are very sound and important reasons for the standard mathematical
definitions. 
 In particular,   no suggestion to change the mathematical definition is intended.  Indeed, as the proof of Theorem \ref{thmalphamart} demonstrates,  the notion of mathematical local time and occupation time and their relationship is extremely powerful in singling out the special value of $\alpha(\lambda)$ for given interface parameter $\lambda$
 and dispersion coefficients $D^\pm$.}

\begin{definition}
Let $X$ be a continuous semimartingale.    The {\it natural occupation time of $G$ by
time $t$},
is defined by  $\tilde\Gamma(G,t) = \int_0^t1(X(s)\in G)ds, t\ge 0,$ for an arbitrary Borel subset $G$ of $\mathbb{R}$.
\end{definition}

The following  result  illustrates another way in which 
the issue raised in Example B relating interfacial conditions
to residence times is indeed a sensitive problem.  The proof
exploits the property of skew Brownian motion
that for any $t>0$,
\begin{equation}
\label{posprob}
P(B^{(\alpha)} > 0) = \alpha.
\end{equation}
  This is easily checked from 
definition and, intuitively, reflects the property that the 
excursion interval $J_{n(t)}$ of $|B|$
containing $t$ must result in a  $[A_{n(t)} = +1]$ coin flip, an
event with probability
$\alpha$.

\begin{theorem}
\label{thmoccuptime}
Let $Y^{(\alpha(\lambda))}$  denote the natural diffusion for the dispersion 
coefficients $D^+, D^-$ and interface parameter $\lambda$.   Denote natural occupation time
processes by
$$\tilde{\Gamma}_\lambda^+(t) = \int_0^t 1[Y^{(\alpha(\lambda))}(s) > 0]ds, \quad t\ge 0.$$
Similarly let $\tilde{\Gamma}_\lambda^-(t) = t- \tilde{\Gamma}_\lambda^+(t), t\ge 0.$  Then,
$$\mathbb{E}\tilde{\Gamma}_\lambda^+(t) > \mathbb{E}\tilde{\Gamma}_\lambda^-(t) \ \forall t>0\quad {\iff} \lambda > {\sqrt{D^+}\over\sqrt{D^+} +\sqrt{D^-}},$$
with equality  when $\lambda = {\sqrt{D^+}\over\sqrt{D^+} +\sqrt{D^-}}$.
\end{theorem}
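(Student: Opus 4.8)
The plan is to reduce the claim to the elementary identity (\ref{posprob}) together with Fubini's theorem, after which only a short algebraic manipulation of $\alpha(\lambda)$ remains. First I would note that the function $\sigma$ entering the definition of $Y^{(\alpha(\lambda))}$ is strictly sign-preserving: since $D^\pm > 0$, one has $\sigma(x) > 0$ if and only if $x > 0$. Hence $1[Y^{(\alpha(\lambda))}(s) > 0] = 1[B^{(\alpha(\lambda))}(s) > 0]$ for every $s \ge 0$, so that
\begin{equation*}
\tilde{\Gamma}_\lambda^+(t) = \int_0^t 1[B^{(\alpha(\lambda))}(s) > 0]\,ds, \qquad t \ge 0 .
\end{equation*}
The integrand is bounded and jointly measurable in $(s,\omega)$, so Fubini's theorem permits interchanging $\mathbb{E}$ with the time integral, giving $\mathbb{E}\tilde{\Gamma}_\lambda^+(t) = \int_0^t P(B^{(\alpha(\lambda))}(s) > 0)\,ds$. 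Since $0 < \lambda < 1$ forces $0 < \alpha(\lambda) < 1$, identity (\ref{posprob}) applies and gives $P(B^{(\alpha(\lambda))}(s) > 0) = \alpha(\lambda)$ for each $s > 0$; therefore
\begin{equation*}
\mathbb{E}\tilde{\Gamma}_\lambda^+(t) = \alpha(\lambda)\,t , \qquad \mathbb{E}\tilde{\Gamma}_\lambda^-(t) = \big(1-\alpha(\lambda)\big)\,t , \qquad t \ge 0 .
\end{equation*}

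It then remains to compare these two linear functions of $t$. Because $t > 0$, the inequality $\mathbb{E}\tilde{\Gamma}_\lambda^+(t) > \mathbb{E}\tilde{\Gamma}_\lambda^-(t)$ holds for all $t > 0$ if and only if it holds for some $t>0$, if and only if $\alpha(\lambda) > \tfrac12$; similarly equality for one $t$ is equality for all $t$ and amounts to $\alpha(\lambda) = \tfrac12$. Substituting the expression $\alpha(\lambda) = \lambda\sqrt{D^-}/\big(\lambda\sqrt{D^-} + (1-\lambda)\sqrt{D^+}\big)$ from Theorem \ref{thmalphamart} and clearing the (positive) denominator, $\alpha(\lambda) > \tfrac12$ is equivalent to $\lambda\sqrt{D^-} > (1-\lambda)\sqrt{D^+}$, hence to $\lambda(\sqrt{D^+} + \sqrt{D^-}) > \sqrt{D^+}$, i.e. to $\lambda > \sqrt{D^+}/(\sqrt{D^+} + \sqrt{D^-})$; reading the same chain of equivalences with equalities gives the stated equality case.

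I do not expect a genuine obstacle here: the substantive content is carried entirely by (\ref{posprob}), whose justification via the excursion/coin-tossing description of $B^{(\alpha)}$ is already indicated in the text. The only points that warrant a word of care are the joint measurability of $(s,\omega)\mapsto 1[B^{(\alpha(\lambda))}(s) > 0]$ required for Fubini (immediate from path measurability of $B^{(\alpha(\lambda))}$) and the check that $\alpha(\lambda)\in(0,1)$, so that (\ref{posprob}) is legitimately invoked.
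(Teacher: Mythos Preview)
Your proof is correct and follows essentially the same route as the paper: reduce the indicator $1[Y^{(\alpha(\lambda))}(s)>0]$ to $1[B^{(\alpha(\lambda))}(s)>0]$ via the sign-preserving property of $\sigma$, apply Fubini and identity (\ref{posprob}) to obtain $\mathbb{E}\tilde{\Gamma}_\lambda^\pm(t)=\alpha(\lambda)t,\,(1-\alpha(\lambda))t$, and then translate $\alpha(\lambda)\gtrless\tfrac12$ into the stated condition on $\lambda$. The only cosmetic difference is that the paper phrases the final comparison as the ratio $\alpha(\lambda)/(1-\alpha(\lambda))=\lambda\sqrt{D^-}/((1-\lambda)\sqrt{D^+})$ rather than comparing $\alpha(\lambda)$ to $\tfrac12$, which is equivalent.
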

\begin{proof}
Using the definition of natural diffusion $Y^{(\alpha(\lambda))}$ for the
parameters $D^\pm, \lambda$ and the above noted property
(\ref{posprob}) of skew Brownian motion, one has
\begin{eqnarray*}
\mathbb{E}\tilde\Gamma_\lambda^+(t) &:=&\mathbb{E} \int_0^t1[Y^{(\alpha(\lambda))}(s)>0]ds\\
&=&\mathbb{E} \int_0^t1[\sqrt{D^+}B^{(\alpha(\lambda))}(s))>0]ds\\
&=& \int_0^tP(B^{(\alpha(\lambda))}(s)> 0)ds\\
&=&  t\alpha(\lambda).
\end{eqnarray*}
Similarly $\mathbb{E}\tilde\Gamma_\lambda^-(t) = 
t(1-\alpha(\lambda))$.  
Thus
\begin{equation*}
{\mathbb{E}\tilde\Gamma_\lambda^+(t)\over\mathbb{E}\tilde\Gamma_\lambda^-(t)}
= ({\alpha(\lambda)\over1-\alpha(\lambda)})= ({\lambda\sqrt{D^-}\over
(1-\lambda)\sqrt{D^+}}).
\end{equation*}
The assertion now follows.
\end{proof}

\vskip .2in

\noindent{\bf Example A vs Example C}.
As noted previously, in the Example C pertaining to insect movement,  the determination of an interface 
condition is largely a statistical issue as there is no scientific rationale
 to apply mass conservation principles, or smooth Fickean flux laws.  
  In fact, it is
 interesting to observe that under the mass conservation one arrives at the
 interface parameter 
 \begin{equation}
 \lambda = D^+/(D^+ + D^-),
 \end{equation}
for which  the residence time is longer on average in the region
 with the faster dispersion rate!  While this is to be anticipated 
 for physical experiments of dispersion
 in porous media of the type described in Example A,  
 Theorem \ref{thmoccuptime} shows that in fact the 
 conservative interface condition (defined by this choice of $\lambda$)  
 would not be appropriate for 
 models of animal movement for which the faster dispersion occurs in
more hostile environments; e.g., 
see \cite{mc_lewis}, \cite{cos_can}, \cite{ovas}  for relevant considerations.  There is indeed something to be learned from data as to what exactly might apply, but such theoretical insights can provide a useful guide, and help to prevent mistaken assumptions when transferring more well-developed physical principles to biological/ecological phenomena.

\vskip .15in

\section{Continuity of Natural Local Time} 

We now discuss another issue pertaining to the definition of
mathematical local time and
the basis for the suggested modification to natural local time.

In the often cited article \cite{walsh}, it was first observed that skew-Brownian motion provides a non-trivial example of a continuous
semimartingale on the interval $(-\infty,\infty)$
having a discontinuous mathematical local time.
Let us now examine this situation in the context
of natural diffusions.  For the purposes of this
discussion take the drift $v = 0$ and consider a
constant diffusion coefficient $D^+ = D^- = D$

Since the quadratic variation of skew Brownian motion coincides with
that of Brownian motion, one has
$<\sqrt{D}B^{(\alpha)}> \equiv <\sqrt{D}B>.$
Among this class of natural diffusions,
one may ask what distinguishes the  particular diffusion 
$X \equiv \sqrt{D}B$ ?
Of course, the answer is that $X$ is determined by
$\alpha = \lambda = 1/2$.
If one views this choice in the context of the flux in
particle concentration, then it provides continuity of flux.
On the other hand, in view of the respective theorems of
Trotter \cite{trotter} and Walsh \cite{walsh},
it is also the unique choice
of $\alpha$ from among all skew Brownian motions
to make local time continuous.
The latter may be viewed as a stochastic particle determination
of the physical diffusion model, among natural diffusions, for constant diffusion coefficient
$D$.

The next theorem,  a version of which was originally conceived in \cite{App09b},
extends this to the more general framework of the present paper,
in particular to include the case $D^+\neq D^-$.
However, it requires the
following modification of the definition of mathematical local time,
referred to here as natural local time.

\begin{definition}
\label{natloctime}
Let $X$ be a continuous semimartingale.  The {\it natural local time at $a$} $\tilde\ell_t^X = {\tilde\ell_t^{X,+} + \tilde\ell_t^{X,-}\over 2}$
 of $X$ is defined by
\begin{equation}
\tilde\ell_t^{X,+}(a) = \lim_{\epsilon\downarrow 0}{1\over \epsilon}\int_0^t1_{[a, a+\epsilon)}(X(s))ds, \quad
\tilde\ell_t^{X,-}(a) = \lim_{\epsilon\downarrow 0}{1\over \epsilon}\int_0^t1_{(a-\epsilon, a]}(X(s))ds
\end{equation}
providing the indicated limits exist almost surely.
\end{definition}

The units of natural local time are then $[\tilde\ell_t^X] = {T\over L}$, appropriate to  a measurement of (occupation) time in the vicinity
of a spatial location $a$.
While the purpose here is not to explore the generality for which natural local time exists among
all continuous semimartingales,  according to the following theorem
 it does exist for natural skew diffusion.  Moreover, continuity has a special significance.

\begin{theorem}
\label{natloctimeofnatdiff}
Let $Y^{(\alpha(\lambda))}$ be the natural skew diffusion with parameters $D^{\pm}, \lambda$.  
Then the natural
local time of $Y^{(\alpha(\lambda))}$ at $0$ 
is continuous if and only if 
 $\lambda = {D^+\over D^+ +D^-}$, i.e., if and only if $\alpha(\lambda)=\alpha^*$
 and thus
 $Y^{(\alpha^*)}$ is the physical diffusion.
 \end{theorem}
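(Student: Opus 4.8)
The plan is to reduce everything to a one-line scaling identity between the one-sided natural local times of $Y^{(\alpha(\lambda))}$ at $0$ and the mathematical local time of the underlying skew Brownian motion at $0$, and then read off the algebra. Write $\alpha=\alpha(\lambda)$. Two facts do the work. First, because the quadratic variation of skew Brownian motion is $\langle B^{(\alpha)}\rangle_s=s$, the natural local time of $B^{(\alpha)}$ at $0$ coincides with its mathematical local time, which exists by the standard semimartingale theory. Second, since $Y^{(\alpha)}=\sigma(B^{(\alpha)})$ and $\sigma$ is multiplication by $\sqrt{D^+}$ on $[0,\infty)$ and by $\sqrt{D^-}$ on $(-\infty,0]$, one has $\{Y^{(\alpha)}(s)\in[0,\epsilon)\}=\{B^{(\alpha)}(s)\in[0,\epsilon/\sqrt{D^+})\}$ and $\{Y^{(\alpha)}(s)\in(-\epsilon,0]\}=\{B^{(\alpha)}(s)\in(-\epsilon/\sqrt{D^-},0]\}$. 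Substituting $\delta=\epsilon/\sqrt{D^{\pm}}$ in Definition \ref{natloctime} (no quadratic-variation weight appears, only $ds$) gives
$$\tilde\ell_t^{Y^{(\alpha)},+}(0)=\frac{1}{\sqrt{D^+}}\,\ell_t^{B^{(\alpha)},+}(0),\qquad \tilde\ell_t^{Y^{(\alpha)},-}(0)=\frac{1}{\sqrt{D^-}}\,\ell_t^{B^{(\alpha)},-}(0),$$
and in particular the natural local time of $Y^{(\alpha)}$ at $0$ exists.

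Next I would substitute the skew Brownian motion local time relations (\ref{skwbmlocrel}), $\ell_t^{B^{(\alpha)},+}(0)=2\alpha\,\ell_t^{B^{(\alpha)}}(0)$ and $\ell_t^{B^{(\alpha)},-}(0)=2(1-\alpha)\,\ell_t^{B^{(\alpha)}}(0)$, to obtain
$$\tilde\ell_t^{Y^{(\alpha)},+}(0)=\frac{2\alpha}{\sqrt{D^+}}\,\ell_t^{B^{(\alpha)}}(0),\qquad \tilde\ell_t^{Y^{(\alpha)},-}(0)=\frac{2(1-\alpha)}{\sqrt{D^-}}\,\ell_t^{B^{(\alpha)}}(0),\qquad t\ge 0.$$
Continuity of the natural local time of $Y^{(\alpha)}$ at $0$ is equivalent to the agreement of its right- and left-hand one-sided versions there, $\tilde\ell_t^{Y^{(\alpha)},+}(0)=\tilde\ell_t^{Y^{(\alpha)},-}(0)$ for all $t$. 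Since $t\mapsto\ell_t^{B^{(\alpha)}}(0)$ is a.s. not identically zero, this holds if and only if $2\alpha/\sqrt{D^+}=2(1-\alpha)/\sqrt{D^-}$, i.e. $\alpha\sqrt{D^-}=(1-\alpha)\sqrt{D^+}$, i.e. $\alpha=\alpha^*=\sqrt{D^+}/(\sqrt{D^+}+\sqrt{D^-})$.

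It then remains to translate $\alpha(\lambda)=\alpha^*$ into a condition on $\lambda$. Using $\alpha(\lambda)=\lambda\sqrt{D^-}/(\lambda\sqrt{D^-}+(1-\lambda)\sqrt{D^+})$ from Theorem \ref{thmalphamart}, the equation $\alpha(\lambda)=\sqrt{D^+}/(\sqrt{D^+}+\sqrt{D^-})$ becomes, after clearing denominators and cancelling the common $\lambda\sqrt{D^+D^-}$ term, $\lambda D^-=(1-\lambda)D^+$, that is $\lambda=D^+/(D^++D^-)$ — the value identified for the physical diffusion in Example A. This closes the equivalence.

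The one step that is not pure bookkeeping is the reduction of ``continuity of natural local time at $0$'' to the equality of the one-sided natural local times: this needs that $\tilde\ell_t^{Y^{(\alpha)},\pm}(0)$ are the right- and left-hand limits at $0$ of a version of $a\mapsto\tilde\ell_t^{Y^{(\alpha)}}(a)$, and that this map is continuous on $\mathbb{R}\setminus\{0\}$ so that $0$ is the only possible discontinuity. The second part holds because, on a neighborhood of any $a\neq 0$, the process $Y^{(\alpha)}$ agrees with a fixed constant multiple of an ordinary Brownian motion (skew Brownian motion being an ordinary diffusion away from $0$), to which Trotter's continuity theorem applies; the first part is the natural-local-time analogue of the Revuz--Yor right-continuous-with-left-limits structure of $a\mapsto\ell_t^{X,+}(a)$, which carries over through the change of variable $y=\sigma(x)$ unchanged since near a non-zero level the measures $ds$ and $d\langle Y^{(\alpha)}\rangle_s=D(Y^{(\alpha)}(s))\,ds$ differ only by the locally constant factor $D(\cdot)$. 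I would isolate this as the crux and treat the algebra above as routine.
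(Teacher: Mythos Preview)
Your argument is correct and follows essentially the same route as the paper: derive the scaling identities $\tilde\ell_t^{Y^{(\alpha)},\pm}(0)=(D^{\pm})^{-1/2}\ell_t^{B^{(\alpha)},\pm}(0)$, combine with (\ref{skwbmlocrel}), and solve the resulting ratio equation for $\alpha$ and then $\lambda$. Your final paragraph---justifying why continuity of $a\mapsto\tilde\ell_t^{Y^{(\alpha)}}(a)$ at $0$ reduces to equality of the one-sided versions there---is a point the paper leaves implicit, so your treatment is in fact slightly more careful on that front.
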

\begin{proof}
In view of (\ref{skwbmlocrel}) observe that
\begin{equation*}
{\ell_t^{B^{(\alpha)},+}(0)\over \ell_t^{B^{(\alpha)},-}(0)}
= {\alpha\over 1-\alpha}, \qquad \forall t\ge 0.
\end{equation*}
Moreover,
\begin{eqnarray*}
\tilde\ell_t^{B^{(\alpha)},+}(0) &=& 
\lim_{\epsilon\downarrow 0}{1\over\epsilon}\int_0^t 1[0\le Y^{(\alpha)}(s) < \epsilon]ds\\
&=& {1\over\sqrt{D^+}}\lim_{\epsilon\downarrow 0}{\sqrt{D^+}\over\epsilon}\int_0^t 1[0\le B^{(\alpha)}(s)< {\epsilon\over\sqrt{D^+}}]ds\\
&=& {1\over\sqrt{D^+}}\ell_t^{B^{(\alpha)},+}(0).
\end{eqnarray*}
Similarly, $\tilde\ell_t^{B^{(\alpha)},-}(0) = {1\over\sqrt{D^-}}\ell_t^{B^{(\alpha)},-}(0).$  Thus,
\begin{equation*}
{\tilde\ell_t^{B^{\alpha(\lambda)},+}(0)\over \tilde\ell_t^{B^{\alpha(\lambda)},-}(0)}
= {\alpha(\lambda)\over 1-\alpha(\lambda)}{\sqrt{D^-}\over\sqrt{D^+}}.
\end{equation*}
Now simply observe that this ratio is one if and only if
$\lambda D^- = (1-\lambda)D^+$, which establishes the assertion.
\end{proof}

\section{Acknowledgments.} 
 The authors are grateful to Jorge Ramirez for several technical comments that improved the
 content and exposition.  The first author was partially supported by
an NSF-IGERT-0333257 graduate training grant in ecosystems informatics at Oregon State 
University, and the remaining authors were partially supported by a grant DMS-1122699 from 
the National Science Foundation.  The corresponding author is also grateful for support provided 
by the Courant Institute of Mathematical Sciences,  New York University, during preparation of the 
final draft of this  article. 

\end{document}